\newtheorem*{thm}{Theorem}
\def\[#1\]{\begin{align*}#1\end{align*}}
\newcommand{\R}{\mathbb{R}}
\newcommand{\N}{\mathbb{N}}
\newcommand{\ceq}{\coloneqq}
\newcommand{\E}{\mathbb{E}}
\renewcommand{\P}{\mathbb{P}}
\def\B{\mathscr{B}}
\newcommand{\lrtx}[1]{\ \text{#1} \ }
\newcommand{\I}{\mathds{1}}
\newcommand{\df}{\mathop{}\!\mathrm{d}}
\newcommand{\wh}{\widehat}
\newcommand{\D}{\mathbb{D}}
\newcommand{\cw}{\rightsquigarrow}
\title{A New Proof for a Strong Law of Large Numbers of Kolmogorov's Type via Weak Convergence} 
\author{Yu-Lin Chou\thanks{
        Author for correspondence:
Yu-Lin Chou,
Institute of Statistics,
National Tsing Hua University,
Hsinchu 30013,
Taiwan;
Email: 
\protect\url{y.l.chou@gapp.nthu.edu.tw}.
The author would like to express gratitude for the reviewing comments received for the previous  version.}}
\date{}
\begin{document}

       \fontsize{10}{15pt}\selectfont
       
        \maketitle
        \allowdisplaybreaks

        \begin{abstract}
        In terms of the Dirac representation of sample mean and the weak convergence of empirical distributions that holds almost surely, we construct a new proof for  a strong law of large numbers of Kolmogorov's type with i.i.d. random variables $X_{1}, X_{2}, \dots$ such that $\lim_{c \to \infty}\sup_{n \in \N}n^{-1}\sum_{i=1}^{n}|X_{i}|\cdot \I_{[c,+\infty[}\circ |X_{i}| = 0$  almost surely. That each random variable $X_{i}$ is $L^{1}$ is also a conclusion. Our proof is independent of  both Kolmogorov's strong law and its known proof(s), and potentially furnishes a  new way to obtain a short  proof of Kolmogorov's strong law.\\

                \noindent
                \textbf{Keywords:}
                 classical Glivenko-Cantelli theorem for right closed rays;
                convergence of moments;
                convergence of probability measures;  
                Dirac representation of sample mean;
                law of large numbers; uniform integrability\\
        
        {\noindent 
        \textbf{AMS MSC 2020:}}
        	60F15;
           	60B10;
        	62F99 
        \end{abstract}

   \section{Introduction}
By Kolmogorov's strong law of large numbers (KSLLN) we mean
the statement that the sample mean of independent identically distributed (i.i.d.) random variables with finite mean converges almost surely to the common mean.
The typical proof (for example, Section 3 of Chapter 4 in Shiryaev \cite{s}) depends on  several (important) results  within the scope of elementary analysis; the length of the typical proof is then inevitably great.

It would be desirable to obtain a short proof of KSLLN that possibly and hopefully also gains new insight into the nature of the ``problem''. 
Toward a new proof of KSLLN having the desired properties,  we propose a new proof of a strong law with the original $L^{1}$ assumption replaced by exactly one mild assumption on the tail-asymptotic behavior of the random variables; the original $L^{1}$ assumption then becomes a conclusion.

On the basis of the Dirac representation of sample mean,
our proof depends mostly on the  classical Glivenko-Cantelli theorem (for right closed rays), 
which can in fact be obtained without first utilizing KSLLN, 
and a result connecting the weak convergence of probability measures with the convergence of the  corresponding first moments.
It turns out that this approach in principle
leads to a short\footnote{Although one may argue that there is a trade-off between length and depth.}  proof independent of KSLLN and its known proof(s).
Moreover,
apart from showing a new connection between strong laws of large numbers and the weak convergence theory,
the proof in general points out another possibility for a weak convergence to imply an almost everywhere convergence;
an existing well-known possibility is the Skorokhod's  representation theorem.\footnote{Theorem 6.7 in Billingsley \cite{b} furnishes a compact statement of the Skorokhod's representation theorem with a proof.}

Our proof scheme is as follows.
There is a version of strong law of large numbers,
given in Theorem 1 in Section 3 of Chapter 4 in Shiryaev \cite{s} with a short, elementary proof,
which imposes a stronger moment condition than KSLLN --- the uniform boundedness of the fourth moment.
Since the empirical distributions are involved with and only with the indicator functions, 
it follows that the classical Glivenko-Cantelli theorem can be obtained without using KSLLN. 
We then establish by the classical Glivenko-Cantelli theorem (viewed as obtained independently of KSLLN) 
the weak convergence of the empirical distribution functions,
and we prove the convergence of the corresponding first moments.

\section{Result}
Let $(\Omega, \mathscr{F}, P)$ be a probability space.
Without employing KSLLN and its known proof(s), 
we prove
\begin{thm}
If
$X_{1}, X_{2}, \dots$
are i.i.d. random variables on $\Omega$,
and if $\lim_{c \to \infty}\sup_{n \in \N}n^{-1}\sum_{i=1}^{n}|X_{i}(\omega)|\cdot \I_{[c, +\infty[}(|X_{i}(\omega)|) = 0$  for $P$-almost all $\omega \in \Omega$,
then $X_{1} \in L^{1}(P)$ and
\[n^{-1}\sum_{i=1}^{n}X_{i} \to_{a.s.} \E X_{1}.
\]
\end{thm}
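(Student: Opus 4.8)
The plan is to read the sample mean as an integral against the empirical measure and then convert an almost sure weak convergence into a convergence of first moments, using the tail hypothesis precisely as a uniform integrability condition. For each $\omega \in \Omega$ and each $n \in \N$ write $\mu_{n}^{\omega} \ceq n^{-1}\sum_{i=1}^{n}\delta_{X_{i}(\omega)}$ for the empirical measure, and let $\mu$ denote the law of $X_{1}$, with distribution function $F(t) \ceq \P(X_{1} \le t)$; note that $\mu$ is a bona fide probability measure on $\R$ whether or not $X_{1}$ is integrable. The Dirac representation of the sample mean reads $n^{-1}\sum_{i=1}^{n}X_{i}(\omega) = \int_{\R} x \df\mu_{n}^{\omega}(x)$, and, crucially, the hypothesis rewrites as $\lim_{c \to \infty}\sup_{n \in \N}\int_{\{|x| \ge c\}}|x|\df\mu_{n}^{\omega}(x) = 0$ for $P$-almost all $\omega$, which is exactly the uniform integrability of the identity map under the family $(\mu_{n}^{\omega})_{n}$.

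First I would invoke the classical Glivenko--Cantelli theorem, viewed (as in the introduction) as a consequence of the fourth-moment strong law applied to the bounded indicators $\I_{]-\infty, t]}\circ X_{i}$, to obtain $\sup_{t \in \R}|F_{n}^{\omega}(t) - F(t)| \to 0$ for $P$-almost all $\omega$, where $F_{n}^{\omega}$ is the distribution function of $\mu_{n}^{\omega}$. Uniform, hence pointwise, convergence of the distribution functions yields $\mu_{n}^{\omega} \cw \mu$ for $P$-almost all $\omega$.

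Next I would fix an $\omega$ in the full-measure set on which both the weak convergence and the uniform integrability hold, and run the standard moment-convergence argument. From uniform integrability, choosing $c_{0}$ with $\sup_{n}\int_{\{|x| \ge c_{0}\}}|x|\df\mu_{n}^{\omega} \le 1$ gives the uniform bound $\sup_{n}\int_{\R}|x|\df\mu_{n}^{\omega} \le c_{0} + 1 \eqc M < \infty$. Testing the weak convergence against the bounded continuous functions $x \mapsto |x| \wedge c$ and letting $c \to \infty$ by monotone convergence yields $\int_{\R}|x|\df\mu \le M$, so $X_{1} \in L^{1}(P)$. To get the convergence of means, I would introduce a bounded continuous truncation $\psi_{c}$ agreeing with the identity on $[-c, c]$ and vanishing off $]-c-1, c+1[$, and split
\begin{align*}
\Bigl|\int x \df\mu_{n}^{\omega} - \int x \df\mu\Bigr| \le 2\int_{\{|x| > c\}}|x|\df\mu_{n}^{\omega} + \Bigl|\int \psi_{c}\df\mu_{n}^{\omega} - \int \psi_{c}\df\mu\Bigr| + 2\int_{\{|x| > c\}}|x|\df\mu.
\end{align*}
Choosing $c$ large makes the first and third terms small (uniformly in $n$ by uniform integrability, and by $X_{1} \in L^{1}$, respectively), after which weak convergence drives the middle term to $0$ as $n \to \infty$; this gives $n^{-1}\sum_{i=1}^{n}X_{i}(\omega) \to \E X_{1}$ for $P$-almost all $\omega$.

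The main obstacle is the clean passage from weak convergence to convergence of the first moments: weak convergence alone controls only integrals of bounded continuous test functions, so the unbounded identity map must be handled by truncation, and the truncation error must be bounded uniformly in $n$. This is precisely where the tail hypothesis earns its keep --- it is the exact uniform integrability needed both to produce the uniform moment bound $M$ (hence the integrability of the limit, and thus $X_{1} \in L^{1}$) and to annihilate the truncation tails uniformly. A secondary point requiring care is the bookkeeping of the two almost sure statements: the argument is run on the intersection of the full-measure sets supporting Glivenko--Cantelli and the uniform integrability, which is again of full measure.
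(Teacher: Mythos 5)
Your proposal is correct and follows essentially the same route as the paper: the Dirac representation of the sample mean, almost-sure weak convergence of the empirical measures obtained from the classical Glivenko--Cantelli theorem (itself derived from the fourth-moment strong law), and the reading of the tail hypothesis as uniform integrability, which upgrades the weak convergence to convergence of first moments and simultaneously yields $X_{1} \in L^{1}(P)$. The only difference is one of packaging: where the paper constructs auxiliary random variables $\xi_{n}(\cdot \ ; \omega)$ on a product probability space in order to invoke Theorem 3.5 of Billingsley, you prove that moment-convergence step directly at the level of the measures $\mu_{n}^{\omega}$ by the standard truncation argument, which is a correct (and arguably more self-contained) inlining of the same lemma.
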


\begin{proof}
Let $\P$ be the probability measure induced by $X_{1}$ on the Borel sigma-algebra $\B_{\R}$ over $\R$.
For all $n \in \N$ and all $\omega \in \Omega$,
let $\wh{\P}_{n}(B \lrtx{;} \omega) \ceq n^{-1}\sum_{i=1}^{n}\I_{B} \circ X_{i}(\omega)$
for all $B \in \B_{\R}$,
so that 
$\wh{\P}_{n}(\cdot \lrtx{;} \omega)$
is the empirical distribution constructed from  $X_{1}(\omega), \dots, X_{n}(\omega)$.

Since the classical  Glivenko-Cantelli theorem\footnote{
A short, simple proof can be found under  Theorem 19.1 in van der Vaart \cite{v}.
}, 
viewed as obtained from  Theorem 1 in Section 3 of Chapter 4 in Shiryaev \cite{s} and hence independently of KSLLN, 
implies that the distribution functions $\wh{F}_{n}(\cdot \ ; \omega)$ of $\wh{\P}_{n}(\cdot \ ; \omega)$ converge uniformly to the distribution function $F$ of $\P$ for $P$-almost all $\omega \in \Omega$,
in particular there is some $A \in \mathscr{F}$ of $P$-measure 1 such that $\wh{F}_{n}(c \ ; \cdot ) \to F(c)$ on $A$ and for all continuity points $c$ of $F$;   
from the portmanteau theorem we then have the weak convergence
\[
\wh{\P}_{n}(\cdot \lrtx{;} \omega) \cw \P
\]
for all $\omega \in A$.

Write\footnote{
The use of different symbols for the bound variable appearing in the integrals is simply in order to minimize any possible distraction.
}
\[
n^{-1}\sum_{i=1}^{n}X_{i} = \int y \df \wh{\P}_{n}(y \lrtx{;} \cdot)
\]
for all $n \in \N$;
we claim that the weak convergence implies the convergence $\int y \df \wh{\P}_{n}(y \lrtx{;} \omega) \to \int x \df \P(x)$ for $P$-almost all $\omega \in \Omega$.
Indeed,
let $\D^{X_{i}}$ denote the Dirac measure at $X_{i}$ for each $i \in \N$.
Given any $\omega \in A$,
define for each $n \in \N \cup \{ 0 \}$
the function
$\xi_{n}(\cdot \lrtx{;} \omega)$
as the natural projection of the product
$\bigtimes_{n \in \N \cup \{ 0 \}}H_{n}$ on $H_{n}$,
where $H_{0}$ is the probability space $(\R, \B_{\R}, \P)$
and
$H_{n}$
is for each $n \in \N$ the probability space obtained simply from probabilitizing $\R$ with respect to $\B_{\R}$ by assigning probability $1/n$ to each of $X_{1}(\omega), \dots, X_{n}(\omega)$.
With respect to the apparent product sigma-algebra,
the existence of the (product) probability measure $\mathscr{P}$ over the product 
$\bigtimes_{n \in \N \cup \{ 0 \}}H_{n}$ 
is well-known; thus $\xi_{0}(\cdot \lrtx{;} \omega), \xi_{1}(\cdot \lrtx{;} \omega), \dots$ form a sequence of random variables defined on the probability space $\bigtimes_{n \in \N \cup \{ 0 \}}H_{n}$ for every $\omega \in A$.
Then, for every $\omega \in A$, the random variable $\xi_{0}(\cdot \lrtx{;} \omega)$ has the same distribution $\P$ 
as $X_{1}$,
and each
$\xi_{n}(\cdot \lrtx{;} \omega)$ 
with $n \in \N$ has
$n^{-1}\sum_{i=1}^{n}\D^{X_{i}(\omega)} =  \wh{\P}_{n}(\cdot \lrtx{;} \omega)$ 
as its distribution (concentrated on $\{ X_{1}(\omega), \dots, X_{n}(\omega) \}$).
It follows that
\[
\int y \df \wh{\P}_{n}(y \lrtx{;} \omega)
=
\E \xi_{n}(\cdot \lrtx{;} \omega)
\]
for all $n \in \N$ and all $\omega \in A$; moreover, we have
\[
\xi_{n}(\cdot \lrtx{;} \omega)
\cw
X_{1}
\]
for all $\omega \in A$. Here the expectation of each $\xi_{n}(\cdot \lrtx{;} \omega)$ is taken with respect to $\mathscr{P}$, 
and the relation $\cw$ between the random variables $\xi_{n}(\cdot \lrtx{;} \omega)$ and $X_{1}$ means the weak convergence relation between their distributions.

Now
\[\int_{ \{|\xi_{n}(\cdot \lrtx{;} \omega)| \geq c \}} |\xi_{n}(\cdot \lrtx{;} \omega)| \df \mathscr{P}
=
n^{-1}\sum_{i=1}^{n}|X_{i}(\omega)|\cdot \I_{[c, +\infty[}(|X_{i}(\omega)|)
\]
for all $n \in \N$, all $\omega \in A$, and all $c \in \R$.
Since there is by assumption some $\underline{A} \in \mathscr{F}$ with $P(\underline{A}) = 1$ such that the collection $\{ \xi_{n}(\cdot \lrtx{;} \omega)\}_{n \in \N}$ is uniformly $\mathscr{P}$-integrable for all $\omega \in A \cap \underline{A}$,
since the weak convergence $\xi_{n}(\cdot \lrtx{;} \omega) \cw X_{1}$ holds for all $\omega \in A \cap \underline{A}$,
and since $P(A \cap \underline{A}) = 1$,
the desired implication then follows\footnote{Results neighboring the cited one exist, and may be more general in some respects. Examples of such a result are enumerated here for possible reference to  potential unexpected applications.
Zapa{\l}a \cite{z}  gives some generalizations of the classical portmanteau theorem for unbounded maps on completely regular topological spaces. Theorem 2.20 in van der Vaart \cite{v} serves as a special case of Theorem 2 in Zapa{\l}a \cite{z}.} 
from Theorem 3.5 in Billingsley \cite{b}.

But Theorem 3.5 in Billingsley \cite{b}  also ensures that $X_{1} \in L^{1}(P)$; the proof is complete. 
\end{proof}

\end{document}